\DeclareMathOperator{\rank}{rank}
\DeclareMathOperator{\dif}{d}
\newcommand{\Cal}{\mathcal{C}}
\newcommand{\ol}{\mathcal{O}}
\def \a{\alpha}
\def \phi{\varphi}
\def \Phi{\varPhi}
\def \p{\pi}
\def \r{\rho}
\def \C{\mathbb{C}\,}
\def\widecheckg{g^{\hspace*{-2.5pt}\vbox to 5pt{\hbox to
0pt{\LARGE$\check{}$}}}\hspace*{2pt}}
\def\widecheckl{\lambda^{\hspace*{-3.5pt}\vbox to 8pt{\hbox to
0pt{\LARGE$\check{}$}}}\hspace*{2pt}}
\begin{document}

\title{Twistorial structures revisited}
\author{Radu Pantilie}  
\email{\href{mailto:radu.pantilie@imar.ro}{radu.pantilie@imar.ro}}
\address{R.~Pantilie, Institutul de Matematic\u a ``Simion~Stoilow'' al Academiei Rom\^ane,
C.P. 1-764, 014700, Bucure\c sti, Rom\^ania}
\subjclass[2010]{Primary 53C28, Secondary 32L25}
\keywords{twistorial structures}

\newtheorem{thm}{Theorem}[section]
\newtheorem{lem}[thm]{Lemma}
\newtheorem{cor}[thm]{Corollary}
\newtheorem{prop}[thm]{Proposition}

\theoremstyle{definition}

\newtheorem{defn}[thm]{Definition}
\newtheorem{rem}[thm]{Remark}
\newtheorem{exm}[thm]{Example}

\numberwithin{equation}{section}

\begin{abstract}
We review the twistorial structures by providing a setting under which the corresponding (differential) geometry can be described, 
by involving the $\r$-con\-nec\-tions. This applies, for example, to give new proofs of the existence of the relevant connections for  
the projective and the quaternionic geometries. Along the way, we show that, in this setting, the Ward transformation is a consequence 
of the good behaviour of the $\r$-connections, under pull back.  
\end{abstract}

\maketitle
\thispagestyle{empty}
\vspace{-3mm}

\section*{Introduction} 

\indent 
A twistor space is a complex manifold $Z$ endowed with a family $\{Y_x\}_{x\in M}$ of compact complex submanifolds 
such that the canonical map $\psi:Y=\bigsqcup_{x\in M}Y_x\to Z$ is a surjective submersion. 
The main task is to describe the (differential) geometry induced on $M$, and to see, to what extent, from this geometry we 
can retrieve $Z$\,.\\ 
\indent 
The first examples are provided by the projective and the conformal structures (see \cite{LeB-thesis}\,, \cite{LeB-nullgeod}\,), 
where $Y$ is $PTM$ and a bundle of hyperquadrics, respectively. On the other hand, we have the quaternionic geometry, 
where $Y$ is a Riemann sphere bundle (see \cite{Pan-qgfs}\,).\\ 
\indent 
To tackle the main task, we, firstly, turn to the canonical projection $\p:Y\to M$ and observe that, for any $x\in M$, the differential of $\p$ 
determines a map from $Y_x$ to the Grassmannian of $T_xM$\,. This is how the `linear twistorial structures', which we consider 
in Section \ref{section:lin_twist_str}\,, appear.\\ 
\indent 
Next, we have to understand that the classical connections are not sufficient to model even the simple examples given by the projective spaces, endowed 
with the corresponding families of Veronese curves. We are, thus, led to consider the $\r$-connections \cite{Pan-qgfs}\,. We have, however, to make 
some further assumptions, such as `regularity' which means that the spaces of sections of the duals of $({\rm ker}\dif\!\psi)|_{Y_x}$\,, $(x\in M)$\,,  
have the same dimension. This supplies, through direct image, an enriched tangent bundle, that is, a vector bundle $E$ over $M$ 
together with a morphism of vector bundles $\r:E\to TM$. The relevant notions and results are given in Section \ref{section:regular_a_twist}\,.\\ 
\indent 
Finally, in Section \ref{section:regular_twist}\,, we study a class of twistor spaces which provides a generalization of the $\r$-quaternionic structures 
of \cite{Pan-qgfs} (Theorem \ref{thm:suff_cond} and Remark \ref{rem:gen_rho_q}\,). The idea, basic in twistor theory, is that any (fairly good) line bundle 
over $Z$ induces, through pull back by $\psi$ and direct image through $\p$, a vector bundle over $M$ endowed with a $\r$-connection.\\ 
\indent   
Along the way, we, also, show that, for regular twistorial structures, the Ward transformation is a consequence of the 
good behaviour of the $\r$-connections, under pull back (Corollary \ref{cor:Ward}\,).

\section{Linear twistorial structures} \label{section:lin_twist_str} 

\indent 
We work in the category of complex manifolds. 

\begin{defn} 
Let $U$ be a (complex) vector space, let $Y$ be a compact manifold, and let $k\in\mathbb{N}$\,, $k\leq\dim U$.
A \emph{linear twistorial $Y\!$-structure} on $U$ is a map $\phi:Y\to{\rm Gr}_kU$.  
Let 
\begin{equation} \label{e:lin_Y-str} 
0\longrightarrow\mathcal{U}_-\longrightarrow Y\times U\longrightarrow\mathcal{U}_+\longrightarrow0 
\end{equation} 
be the pull back through $\phi$ of the tautological exact sequence of vector bundles over ${\rm Gr}_kU$. 
We say that \eqref{e:lin_Y-str} (which, obviously, determines $\phi$) is \emph{the exact sequence (of vector bundles) of $\phi$}\,.     
If the associated cohomology exact sequence gives a linear isomorphism between $U$ and the space of sections 
of $\mathcal{U}_+$ we say that $\phi$ is \emph{maximal} (cf.\ \cite{Kod}\,). 
\end{defn} 

\indent 
For brevity, we shall use the expression `linear $Y$-structure' instead of `linear twistorial $Y$-structure'.\\ 
\indent 
The vector spaces endowed with linear $Y\!$-structures form a category in an obvious way (we can even let $Y$ to change from 
one vector space to another, if necessary).\\ 
\indent 
Let $\phi:Y\to{\rm Gr}_kU$ be a maximal linear $Y\!$-structure on $U$. Note that, $\phi$ must be nonconstant if $Y$ is not a point. 
Furthermore, $H^0\bigl(\mathcal{U}_-\bigr)=0$ and, under the further assumption that $H^1(\ol_Y)=0$\,, then, also 
$H^1\bigl(\mathcal{U}_-\bigr)=0$\,. Conversely, the exact sequence \eqref{e:lin_Y-str} corresponds 
to a maximal linear $Y\!$-structure if $H^0\bigl(\mathcal{U}_-\bigr)=0$ and $H^1\bigl(\mathcal{U}_-\bigr)=0$\,.\\ 
\indent 
Now, dualizing the exact sequence \eqref{e:lin_Y-str} of a linear $Y\!$-structure $\phi$ on $U$  
we obtain the exact sequence of a linear $Y\!$-structure on $U^*$. However, the dual of a maximal linear $Y\!$-structure is not necessarily maximal. 
For this reason, let $E$ be the dual of the space of 
sections of $\mathcal{U}_-^*$ and let $\r$ be the transpose of the linear map from $U^*$ to $E^*$ given by the cohomology exact sequence 
of the dual of \eqref{e:lin_Y-str}\,. As $\mathcal{U}_-^*$ is generated by its (global) sections, the obvious morphism of vector bundles 
$Y\times E^*\to\mathcal{U}_-^*$ is surjective and therefore we obtain an exact sequence 
\begin{equation} \label{e:lin_rho_induced} 
0\longrightarrow\mathcal{U}_-\longrightarrow Y\times E\longrightarrow\mathcal{E}\longrightarrow0\;,  
\end{equation} 
for some vector bundle $\mathcal{E}$ over $Y$. Moreover, $\r$ induces a morphism from \eqref{e:lin_rho_induced} to \eqref{e:lin_Y-str}\,; 
in particular, a morphism $\mathcal{R}:\mathcal{E}\to\mathcal{U}_+$ from which, if $H^0\bigl(\mathcal{U}_-\bigr)=0$\,, 
we retrieve $\r$ when passing to the spaces of sections.  

\begin{rem} 
1) The dual of \eqref{e:lin_rho_induced} corresponds to a maximal linear $Y$-structure.\\ 
\indent 
2) From the dual of \eqref{e:lin_Y-str} we deduce that the cokernel of $\r$ is the dual of $H^0\bigl(\mathcal{U}_+^*\bigr)$\,. 
Furthermore, if $H^1(\ol_Y)=0$ then the kernel of $\r$ is the dual of $H^1\bigl(\mathcal{U}_+^*\bigr)$\,. 
\end{rem} 

\begin{exm} 
A vector bundle over $\C\!P^1$ appears as the corresponding 
$\mathcal{U}_+$ of a linear $\C\!P^1$-structure if and only if it is nonnegative.\\ 
\indent  
If the corresponding map is an embedding, we retrieve the linear quaternionic-like structures of \cite{Pan-hqo}\,,  
whilst maximal linear $\C\!P^1$-structure are just the $\r$-quaternionic structures of \cite{Pan-qgfs}\,. 
Then the dual of the corresponding \eqref{e:lin_Y-str}\,, also, corresponds 
to a maximal linear $\C\!P^1$-structure if and only if in the Birkhoff--Grothendieck decomposition of $\mathcal{U}_+$ 
appear only terms of Chern number one; that is, if and only if the given linear $\C\!P^1$-structure gives a linear (classical, complex) 
quaternionic structure.  
\end{exm} 

\begin{exm} \label{exm:lin_Y-str} 
Let $Y$ be endowed with an ample line bundle $L$\,. Then from the Kodaira vanishing theorem and Serre duality we obtain 
$H^0(L^*)=0$\,; furthermore, if $\dim Y\geq2$ then, also, $H^1(L^*)=0$\,.\\ 
\indent 
Therefore if $\dim Y\geq2$\,, $H^1(\ol_Y)=0$ and $L$ has empty base locus, the map from $Y$ into the projectivisation of the dual of the space 
of sections of $L$ is a maximal linear $Y\!$-structure. 
Moreover, the dual of the corresponding \eqref{e:lin_Y-str}\,, also, corresponds to a maximal linear $Y\!$-structure. 
Furthermore, on tensorising $L$ with a trivial vector bundle then we again obtain the exact sequence 
of a linear $Y\!$-structure whose dual, also, corresponds to a linear $Y\!$-structure. 
\end{exm}

\section{Regular almost twistorial structures} \label{section:regular_a_twist} 

\indent 
Twistor theory imposes the use of the category whose objects are triples $(M,E,\r)$\,, 
where $M$ is a manifold, $E$ is a vector bundle over $M$, and $\r:E\to TM$ is a morphism of vector bundles. 
The morphisms between two such objects $(M,E,\r)$ and $(M',E',\r')$ are pairs $(\phi,\Phi)$\,, where 
$\phi:M\to M'$ is a map, and $\Phi:E\to E'$ is a vector bundles morphism, over $\phi$\,, such that $\r'\circ\Phi=\dif\!\phi\circ\r$\,.\\ 
\indent 
The notion of connection adapts, accordingly, to this setting. Let $(P,M,G)$ be a principal bundle, and let $\r:E\to TM$ be a 
morphism of vector bundles, where $E$ is a vector bundle over $M$. A \emph{principal $\r$-connection} \cite{Pan-qgfs} on $P$ 
is a morphism of vector bundles $c:E\to TP/G$ such that when composed with the canonical morphism of vector bundles 
$TP/G\to TM$ gives $\r$\,.\\ 
\indent 
We, also, have the notion of associated $\r$-connections which for vector bundles corresponds to covariant $\r$-derivations.  

\begin{prop} \label{prop:rho-pull-back}
Let $(\phi,\Phi):(M,E,\r)\to(M',E',\r')$ be a morphism and let $(P,M',G)$ be a principal bundle.\\ 
\indent  
For any principal $\r'$-connection $c'$ on $P$ there exists a unique principal $\r$-connection $c$ on $\phi^*P$ 
such that $c'\circ\Phi=\widetilde{\phi}\circ c$\,, where $\widetilde{\phi}:T(\phi^*P)/G\to TP/G$ is the morphism of vector bundles 
induced by (the differential of) $\phi$\,. 
\end{prop}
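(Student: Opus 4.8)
The plan is to describe $T(\phi^*P)/G$ as a fibre product and then transport $c'$ through it. Write $p:TP/G\to TM'$ and $q:T(\phi^*P)/G\to TM$ for the canonical projections, with kernels $\operatorname{ad}P$ and $\operatorname{ad}(\phi^*P)$ respectively, and recall the canonical identification $\operatorname{ad}(\phi^*P)=\phi^*(\operatorname{ad}P)$. The first step is to verify the standard fact that the $G$-equivariant map $\phi^*P\to P$ covering $\phi$ induces an isomorphism of vector bundles over $M$ between $T(\phi^*P)/G$ and the fibre product $TM\times_{TM'}\phi^*(TP/G)$ --- the bundle whose fibre at $x\in M$ consists of the pairs $(v,\xi)$ with $v\in T_xM$, $\xi\in(TP/G)_{\phi(x)}$ and $\dif\!\phi(v)=p(\xi)$ --- in such a way that $q$ corresponds to the first projection, the inclusion $\phi^*(\operatorname{ad}P)\hookrightarrow T(\phi^*P)/G$ to $\xi\mapsto(0,\xi)$, and $\widetilde{\phi}$ to the second projection followed by the canonical morphism $\phi^*(TP/G)\to TP/G$. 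I would check this fibrewise, using that $T(\phi^*P)=TM\times_{TM'}TP$ inside $TM\times TP$ and that $G$ acts only on the $TP$-factor (equivalently, verify it for a product bundle and patch).

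Granting this, existence is immediate. Viewing $\Phi$ as a morphism $E\to\phi^*E'$ over $M$ and forming $\phi^*c':\phi^*E'\to\phi^*(TP/G)$, set
\[
c:=\bigl(\r,\ (\phi^*c')\circ\Phi\bigr):E\longrightarrow TM\times_{TM'}\phi^*(TP/G)=T(\phi^*P)/G\,.
\]
This pair does lie in the fibre product, since for $e\in E$ one has $p\bigl(c'(\Phi(e))\bigr)=\r'(\Phi(e))=\dif\!\phi\bigl(\r(e)\bigr)$, the two equalities being $p\circ c'=\r'$ and the hypothesis $\r'\circ\Phi=\dif\!\phi\circ\r$ that $(\phi,\Phi)$ is a morphism. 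By construction $c$ is a morphism of vector bundles with $q\circ c=\r$, hence a principal $\r$-connection on $\phi^*P$, and $\widetilde{\phi}\circ c=c'\circ\Phi$ follows at once from the description of $\widetilde{\phi}$ in the first step.

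For uniqueness, suppose $c_1,c_2$ are principal $\r$-connections on $\phi^*P$ with $\widetilde{\phi}\circ c_i=c'\circ\Phi$. Then $q\circ(c_1-c_2)=\r-\r=0$, so $c_1-c_2$ takes values in $\ker q=\operatorname{ad}(\phi^*P)=\phi^*(\operatorname{ad}P)$; but on this subbundle $\widetilde{\phi}$ is the canonical morphism $\phi^*(\operatorname{ad}P)\to\operatorname{ad}P$, which is an isomorphism on each fibre, so $\widetilde{\phi}\circ(c_1-c_2)=0$ forces $c_1=c_2$. I expect the only genuine work to be in the first step --- pinning down the fibre-product description of $T(\phi^*P)/G$ and the matching description of $\widetilde{\phi}$ --- after which the argument is purely formal. (One could instead pull $c'$ back by working with local $G$-invariant vector fields along $\phi^*P\to P$, but this is less transparent when $\phi$ is neither an immersion nor a submersion.)
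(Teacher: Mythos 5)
Your proof is correct and follows essentially the same route as the paper: the paper likewise identifies $T(\phi^*P)/G$ with the sub-bundle of pairs $(X,U)\in TM\times(TP/G)$ satisfying $\dif\!\phi(X)=\widetilde{\dif\!\pi}(U)$ and then defines $c$ by the pair $(\r,\,c'\circ\Phi)$, the compatibility being exactly your computation $\widetilde{\dif\!\pi}\circ c'\circ\Phi=\r'\circ\Phi=\dif\!\phi\circ\r$. Your explicit uniqueness argument via $\ker q=\operatorname{ad}(\phi^*P)$ is a spelled-out version of the paper's remark that the two components uniquely determine a map into the fibre product.
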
 
\begin{proof} 
Let $\p:P\to M'$ be the projection. As $\phi^*P$ embedded into $M\times P$ is formed of those pairs $(x,u)$ with $\phi(x)=\p(u)$\,, 
we have that $T(\phi^*P)/G$ embeds into $TM\times(TP/G)$ such that be formed of those pairs $(X,U)$ satisfying 
$\dif\!\phi(X)=\widetilde{\dif\!\pi}(U)$\,, where $\widetilde{\dif\!\pi}:TP/G\to TM'$ is the morphism of vector bundles induced by $\dif\!\p$\,.\\ 
\indent  
On the other hand, $\widetilde{\dif\!\pi}\circ c'\circ\Phi=\r'\circ\Phi=\dif\!\phi\circ\r$\,. Consequently, $\r$ and $c'\circ\Phi$ uniquely determine 
a vector bundles morphism $c:E\to T(\phi^*P)/G$, as required. 
\end{proof} 

\indent 
The principal $\r$-connection obtained in Proposition \ref{prop:rho-pull-back} is called \emph{the pull back by $(\phi,\Phi)$ of $c'$}. 

\begin{rem} 
The association $c'\mapsto c$ of Proposition \ref{prop:rho-pull-back} is a morphism of (possibly empty) affine spaces. 
The involved vector spaces are the spaces of sections of ${\rm Hom}(E',{\rm Ad}P)$ and ${\rm Hom}\bigl(E,{\rm Ad}(\phi^*P)\bigr)$, respectively. 
The corresponding linear map is induced by $\Phi$ through the identification ${\rm Ad}(\phi^*P)=\phi^*({\rm Ad}P)$\,. 
\end{rem} 

\indent 
Now, recall \cite{PanWoo-sd} that, an \emph{almost twistorial structure} on a manifold $M$ is a triple $(Y,\p,\Cal)$\,, where 
$\p:Y\to M$ is a proper surjective submersion, and $\Cal$ is a distribution on $Y$ such that $\Cal\cap({\rm ker}\dif\!\p)=0$\,.\\ 
\indent 
Let $(Y,\p,\Cal)$ be an almost twistorial structure on $M$. For each $x\in M$, denote $Y_x=\p^{-1}(x)$\,, and note that we have a map 
$Y_x\to{\rm Gr}_k(T_xM)$\,, $y\mapsto\dif\!\p\bigl(\Cal_y\bigr)$\,, $\bigl(y\in Y_x\bigr)$\,, where $k=\rank\Cal$.  
If this map is a maximal linear $Y_x$-structure on $T_xM$, for any $x\in M$,  
we say that $(Y,\p,\Cal)$ is \emph{maximal}.\\ 
\indent  
Let  
\begin{equation} \label{e:Y_str} 
0\longrightarrow\Cal\longrightarrow \p^*(TM)\longrightarrow\mathcal{T}\longrightarrow0 
\end{equation} 
be the exact sequence of vector bundles over $Y$ induced, through pull back, by the tautological exact sequence over 
${\rm Gr}_k(TM)$\,. Obviously, we have an isomorphism between $\mathcal{T}$ and the quotient of $TY$ through $\Cal+({\rm ker}\dif\!\p)$\,.\\  
\indent 
We are interested in the direct image through $\p$ of the dual of $\Cal$\,. This is a vector bundle over $M$ if 
$x\mapsto h^0\bigl(\Cal^*_x\bigl)$ is constant (see \cite[p.\ 211]{GraRem-cas}\,), where we have denoted $\Cal_x=\Cal|_{Y_x}$\,;  
then we say that $(Y,\p,\Cal)$ is \emph{regular}. Assuming this, on denoting by $E$ the dual of the space of sections of the induced vector bundle, 
and by dualizing \eqref{e:Y_str} we, also, obtain a morphism of vector bundles $\r:E\to TM$. 
Furthermore, in this context, \eqref{e:lin_rho_induced} becomes 
\begin{equation} \label{e:rho_induced} 
0\longrightarrow\Cal\longrightarrow\p^*E\longrightarrow\mathcal{E}\longrightarrow0\;, 
\end{equation} 
for some vector bundle $\mathcal{E}$\,. Denote by $\chi$ the vector bundles morphism, over $\p$\,, obtained as the composition 
of $\Cal\to\p^*E$ followed by the canonical bundle map $\p^*E\to E$\,.\\  
\indent 
Thus, on denoting by $\iota:\Cal\to TY$ the inclusion, we have obtained a morphism $(\p,\chi):(Y,\Cal,\iota)\to(M,E,\r)$\,.\\ 
\indent 
The following result is fundamental in twistor theory. 

\begin{prop} \label{prop:almost_Ward} 
Let $(Y,\p,\Cal)$ be a regular almost twistorial structure on $M$, let $E$ be the direct image of $\Cal^*$ by $\p$, 
denote by $\r$ the induced morphism of bundles from $E$ to $TM$, and let $(\p,\chi):(Y,\Cal,\iota)\to(M,E,\r)$ 
be the resulting morphism, where $\iota:\Cal\to TY$ is the inclusion.\\ 
\indent 
For any principal bundle $P$ over $M$, the pull back by $(\p,\chi)$ establishes an isomorphism between the following affine spaces:\\ 
\indent 
\quad{\rm (i)} the space of principal $\r$-connections on $P$,\\ 
\indent 
\quad{\rm (ii)} the space of principal partial connections on $\p^*P$, over $\Cal$\,. 
\end{prop}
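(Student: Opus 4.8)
The plan is to construct an explicit inverse of the pull-back map, reducing the statement to the way $E$, $\r$ and $\chi$ were built out of direct images. First I would note that a principal partial connection on $\p^*P$ over $\Cal$ is the same as a principal $\iota$-connection on $\p^*P$; hence Proposition \ref{prop:rho-pull-back}, applied to the morphism $(\p,\chi):(Y,\Cal,\iota)\to(M,E,\r)$ and to the principal bundle $P$ over $M$, makes the pull back by $(\p,\chi)$ a well-defined map from (i) to (ii), and, by the Remark following Proposition \ref{prop:rho-pull-back}, a morphism of affine spaces whose linear part is induced by $\chi$ through ${\rm Ad}(\p^*P)=\p^*({\rm Ad}P)$. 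So it remains to prove bijectivity.

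Next I would reformulate (ii). As $T(\p^*P)=TY\times_{TM}TP$ and the $G$-action on $TP$ is over $TM$, there is a canonical identification
\[
T(\p^*P)/G=\p^*(TP/G)\times_{\p^*(TM)}TY\;,
\]
under which $\widetilde{\p}:T(\p^*P)/G\to TP/G$ is the first projection, the canonical morphism $T(\p^*P)/G\to TY$ is the second, $\p^*(TP/G)\to\p^*(TM)$ is $\p^*p$ (where $p:TP/G\to TM$ is the canonical morphism), and $TY\to\p^*(TM)$ is $\dif\!\p$\,. Consequently, composing with $\widetilde{\p}$ identifies (ii) with the set of morphisms of vector bundles $f:\Cal\to TP/G$, over $\p$\,, such that $p\circ f=\dif\!\p\circ\iota$\,; and, by the defining relation $c\circ\chi=\widetilde{\p}\circ(\text{pull back of }c)$ of Proposition \ref{prop:rho-pull-back}, under this identification the map of the first paragraph becomes $c\mapsto c\circ\chi$.

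It then suffices to analyse $c\mapsto c\circ\chi$. By regularity $E^*$ is the direct image of $\Cal^*$ by $\p$\,, so $E_x=H^0(\Cal_x^*)^*$\,, and the projection formula gives, for every vector bundle $W$ over $M$, a canonical isomorphism ${\rm Hom}(E,W)=\p_*\bigl(\Cal^*\otimes\p^*W\bigr)$, natural in $W$; unravelling the construction of $\chi$ from \eqref{e:rho_induced} (fibrewise over $M$, the transpose of the evaluation $H^0(\Cal_x^*)\to\Cal_y^*$) identifies this isomorphism with $g\mapsto g\circ\chi$\,. Applying it with $W=TP/G$ shows that $c\mapsto c\circ\chi$ maps (i) bijectively onto a subset of the morphisms $\Cal\to TP/G$ over $\p$\,; and applying it with $W=TM$, together with the identity $\r\circ\chi=\dif\!\p\circ\iota$ (which holds precisely because $(\p,\chi):(Y,\Cal,\iota)\to(M,E,\r)$ is a morphism), shows that this bijection carries the constraint $p\circ c=\r$ exactly onto $p\circ f=\dif\!\p\circ\iota$ (the implication $\Rightarrow$ being $p\circ(c\circ\chi)=(p\circ c)\circ\chi$, and $\Leftarrow$ following from the injectivity of $g\mapsto g\circ\chi$). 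Composing with the reformulation of (ii) yields the desired inverse, hence the isomorphism of affine spaces; in particular (i) is nonempty if and only if (ii) is.

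The one step requiring genuine care is this last one: one must check that the three ``dual-of-evaluation'' prescriptions defining $E$, $\r$ and $\chi$ fit together, so that the direct-image isomorphism ${\rm Hom}(E,W)=\p_*\bigl(\Cal^*\otimes\p^*W\bigr)$ really is precomposition with $\chi$ and does send $\r$ to $\dif\!\p\circ\iota$\,. The first step is purely formal (Proposition \ref{prop:rho-pull-back} and its Remark), and the reformulation of (ii) is a routine diagram chase with the Atiyah sequences of $P$ and $\p^*P$.
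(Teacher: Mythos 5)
Your proposal is correct and rests on the same key fact as the paper's proof, namely that regularity identifies $E^*_x$ with $H^0(\Cal^*_x)$ via restriction along $\chi$, so that precomposition with $\chi$ is a bijection from morphisms out of $E$ to morphisms out of $\Cal$ over $\p$. The only difference is one of packaging: the paper reduces to $P=M\times G$ and compares connection forms in $\mathfrak{g}\otimes E^*$ and $\mathfrak{g}\otimes\Cal^*$, whereas you work globally with the Atiyah sequences and the projection formula, which spells out the same argument without localizing.
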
   
\begin{proof} 
It is sufficient to consider the case $P=M\times G$. Then any principal $\r$-connection on $P$ is given by its (local) connection form 
which is a section of $\mathfrak{g}\otimes E^*$, where $\mathfrak{g}$ is the Lie algebra of $G$. 
On the other hand, as $\p^*P=Y\times G$, any principal partial connection on $\p^*P$, over $\Cal$\,, is given by a section of 
$\mathfrak{g}\otimes\Cal^*$.\\ 
\indent 
Accordingly, the pull back by $(\p,\chi)$ associates to any section $A$ of $\mathfrak{g}\otimes E^*$ the restriction to $\Cal$ of $\p^*A$\,.\\ 
\indent  
Now, as $(Y,\p,\Cal)$ is regular, and $E$ is the direct image through $\p$ of $\Cal^*$, to complete the proof, just note that, for any $x\in M$, 
the given isomorphism from $E^*_x$ onto the space of sections of $\Cal^*_x$ 
is expressed by $\a\mapsto\a|_{\Cal_x}$\,, for any $\a\in E^*_x$\,.
\end{proof} 

\indent 
Let $(Y,\p,\Cal)$ be a regular almost twistorial structure on $M$. To try to describe the geometry induced on $M$, 
let $\nabla$ be a $\r$-connection (locally defined) on $E$\,. By Proposition \ref{prop:almost_Ward}\,, 
$\nabla$ corresponds to a partial connection on $\p^*E$\,, over $\Cal$\,, which we shall denote by $\p^*\nabla$.  
On denoting by $p:\p^*E\to\mathcal{E}$ the projection, we obtain a section $b$ of $\mathcal{E}\otimes\bigl(\otimes^2\Cal^*\bigr)$ 
given by $b(s,t)= p\bigl((\p^*\nabla)_st\bigr)$ for any local sections $s$ and $t$ of $\Cal$\,. 

\begin{prop} \label{prop:suff_cond} 
Suppose that the following two conditions are satisfied:\\ 
\indent 
\quad{\rm (1)} $H^1\bigl(\Cal_x\otimes\bigl(\otimes^2\Cal^*_x\bigr)\bigr)=0$\,, for any $x\in M$\,;\\ 
\indent 
\quad{\rm (2)} $H^0\bigl(\Cal^*_x\bigr)\otimes H^0\bigl(\Cal^*_x\bigr)\to H^0\bigl(\otimes^2\Cal^*_x\bigl)$ is surjective, for any $x\in M$,  
and of constant rank.\\ 
\indent 
Then, locally, there exists a $\r$-connection $\widetilde{\nabla}$ on $E$ such that $\Cal$ is preserved by $\p^*\widetilde{\nabla}$ 
(that is, $(\p^*\widetilde{\nabla})_st$ is a local section of $\Cal$\,, for any local sections $s$ and $t$ of $\Cal$\,). 
\end{prop}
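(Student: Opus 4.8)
The plan is to show that the obstruction to $\Cal$ being preserved by $\p^*\nabla$ — namely the section $b$ of $\mathcal{E}\otimes(\otimes^2\Cal^*)$ — can be killed by adding to the starting $\r$-connection $\nabla$ a suitable correction term. Recall from Proposition \ref{prop:almost_Ward} (its proof) that $\r$-connections on $E$ differ by sections of ${\rm Hom}(E,{\rm End}E)$; equivalently, in the local model their connection forms differ by sections of ${\rm End}E\otimes E^*$, and under pull back the ambiguity in $\p^*\nabla$ is exactly a section of ${\rm End}(\p^*E)\otimes\Cal^*$. Composing with the projection $p:\p^*E\to\mathcal{E}$ and with the inclusion $\Cal\hookrightarrow\p^*E$ (in the second slot), the effect on $b$ of changing $\nabla$ is to add a section of $\mathcal{E}\otimes\Cal^*\otimes\Cal^*$ lying in the image of the map $({\rm End}E)\otimes E^*\to\mathcal{E}\otimes\Cal^*\otimes\Cal^*$ induced by \eqref{e:rho_induced}. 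So the whole problem reduces to showing that $b$ lies in that image.

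First I would fix $x\in M$ and restrict everything to $Y_x$; since regularity makes $E$, $\mathcal{E}$ and the maps fiberwise, it suffices to solve the problem one fiber at a time and then note that constancy of the relevant ranks (hypothesis (2)) lets the fiberwise solutions be chosen to vary smoothly, giving a local $\r$-connection. On $Y_x$ the exact sequence \eqref{e:rho_induced} reads $0\to\Cal_x\to Y_x\times E_x\to\mathcal{E}_x\to0$. Tensoring with $\Cal_x^*$ and taking cohomology, the connecting map and hypothesis (1), $H^1(\Cal_x\otimes\Cal_x^*)=0$ — wait, more precisely I need $H^1$ of $\Cal_x$ tensored with $\otimes^2\Cal_x^*$, which is hypothesis (1) as stated. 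The point is: tensor \eqref{e:rho_induced} by $\otimes^2\Cal_x^*$ and pass to sections. Because $Y_x\times E_x$ is trivial, $H^0$ of the middle term is $E_x\otimes H^0(\otimes^2\Cal_x^*)$, and by hypothesis (1) the sequence of $H^0$'s is right-exact, so $H^0(\mathcal{E}_x\otimes\otimes^2\Cal_x^*)$ is the quotient of $E_x\otimes H^0(\otimes^2\Cal_x^*)$. The section $b|_{Y_x}$ is an element of this quotient, hence lifts to an element $\beta_x\in E_x\otimes H^0(\otimes^2\Cal_x^*)$.

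Next I would use hypothesis (2) to rewrite $\beta_x$ in the required form. Since $E_x^*$ is identified (by regularity) with $H^0(\Cal_x^*)$, a tensor $({\rm End}E)_x\otimes E_x^* = E_x\otimes E_x^*\otimes E_x^*$ maps to $E_x\otimes H^0(\otimes^2\Cal_x^*)$ via the multiplication $H^0(\Cal_x^*)\otimes H^0(\Cal_x^*)\to H^0(\otimes^2\Cal_x^*)$ in the last two slots, and hypothesis (2) says this multiplication is surjective; therefore $\beta_x$ is the image of some $a_x\in({\rm End}E)_x\otimes E_x^*$. Reversing the chain of identifications, $a_x$ is precisely a pointwise modification of the $\r$-connection whose effect on $b$ at $x$ is $-b|_{Y_x}$. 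Constancy of ranks in (2) (together with the constancy built into regularity and (1)) ensures the choices can be made to depend smoothly on $x$ on a neighbourhood, so we obtain a local $\r$-connection $\widetilde{\nabla}$ with the property that $p\bigl((\p^*\widetilde{\nabla})_s t\bigr)=0$ for all local sections $s,t$ of $\Cal$, i.e. $(\p^*\widetilde{\nabla})_s t$ is a section of $\ker p=\Cal$. This is the assertion.

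The main obstacle I expect is the bookkeeping around the two cohomology hypotheses: hypothesis (1) is what makes $b|_{Y_x}$ liftable from $H^0(\mathcal{E}_x\otimes\otimes^2\Cal_x^*)$ to the trivial middle term (right-exactness of the long exact sequence in $H^0$), and hypothesis (2) is what makes the lift attainable from an honest $\r$-connection modification rather than from an arbitrary $E_x$-valued quadratic form; keeping straight which slot of the tensor each map acts on, and checking that "surjective and of constant rank" really does give a smooth fiberwise splitting (so that the local $\r$-connection exists, not merely a fiberwise family), is the delicate part. Everything else is linear algebra and the identifications already set up before the statement.
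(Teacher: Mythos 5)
Your proposal is correct and follows essentially the same route as the paper: tensor \eqref{e:rho_induced} by $\otimes^2\Cal^*$, use hypothesis (1) to lift $b|_{Y_x}$ to $E_x\otimes H^0\bigl(\otimes^2\Cal^*_x\bigr)$, use hypothesis (2) and the identification $E_x^*\cong H^0\bigl(\Cal^*_x\bigr)$ to realise it as a section $B$ of $E\otimes\bigl(\otimes^2E^*\bigr)$, and set $\widetilde{\nabla}=\nabla-B$. The only point you leave informal --- that the fibrewise lifts assemble into a holomorphic local section --- is exactly where the paper invokes the constancy of $h^0\bigl(\mathcal{E}_x\otimes\bigl(\otimes^2\Cal^*_x\bigr)\bigr)$ via Kodaira--Spencer semicontinuity, so the direct images are vector bundles and the lifting can be done at the level of sections.
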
 
\begin{proof} 
Firstly, tensorise \eqref{e:rho_induced} with $\otimes^2\Cal^*$, restrict to each fibre of $\p$\,, 
and then pass to the associated cohomology exact sequences. Then conditions (1)\,, (2)\,, and \cite[Theorem 2.3]{KodSpe-I_II} 
imply that $h^0\bigl(\mathcal{E}_x\otimes\bigl(\otimes^2\Cal^*_x\bigr)\bigr)$ does not depend of $x\in M$. 
Consequently, $b$ is induced (locally) by a section of $E\otimes\bigl(\otimes^2\Cal^*\bigr)$\,, and (applying, again, (2)\,) we deduce that 
$b$ is induced by a section $B$ of $E\otimes\bigl(\otimes^2E^*\bigr)$\,; that is, $b=p\circ\bigl((\p^*B)|_{\Cal\otimes\Cal}\bigr)$\,.\\ 
\indent  
Then the $\r$-connection $\widetilde{\nabla}$ on $E$ given by $\widetilde{\nabla}_st=\nabla_st-B(s,t)$\,, for any local sections 
$s$ and $t$ of $E$\,, is as required. 
\end{proof} 

\begin{rem}  
In Proposition \ref{prop:suff_cond}\,, if the distribution $\Cal$ is one-dimensional then (1) implies that $(Y,\p,\Cal)$ is regular. 
Also, if $\Cal_x^*$ is very ample, for any $x\in M$, then (2) is automatically satisfied, provided that (1) holds. 
\end{rem} 

\begin{cor} \label{cor:suff_cond} 
Let $\p:Y\to M$ be a proper surjective submersion, let $\Cal$ be a distribution on $Y$, and let $L$ be a line bundle over $Y$ such that:\\ 
\indent 
\quad{\rm (a)} $\Cal\cap({\rm ker}\dif\!\p)=0$\,;\\ 
\indent 
\quad{\rm (b)} $\Cal\otimes L$ restricted to each fibre of $\p$ is trivial;\\ 
\indent 
\quad{\rm (c)} $L|_{Y_x}$ is very ample, with zero first cohomology group, where $Y_x=\p^{-1}(x)$\,, for any $x\in M$.\\ 
\indent 
Then $(Y,\p,\Cal)$ is a regular almost twistorial structure and, locally, there exist  
$\r$-connections $\nabla$ on $E$ such that $\Cal$ is preserved by $\p^*\nabla$.  
Furthermore, if some fibre of $Y$ is of dimension at least two and has zero first Betti number then $(Y,\p,\Cal)$ is maximal. 
\end{cor}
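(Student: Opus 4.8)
The plan is to verify the three hypotheses feed into the machinery already established, namely Proposition \ref{prop:suff_cond} and the preceding discussion, and then to read off the final maximality claim from Example \ref{exm:lin_Y-str}.

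First I would check that $(Y,\p,\Cal)$ is a regular almost twistorial structure. Condition (a) is exactly the requirement $\Cal\cap({\rm ker}\dif\!\p)=0$ in the definition of an almost twistorial structure, so $(Y,\p,\Cal)$ is almost twistorial. For regularity, I need $x\mapsto h^0(\Cal^*_x)$ to be constant. Here is where (b) and (c) combine: on each fibre $Y_x$ we have $\Cal_x\otimes L|_{Y_x}$ trivial, hence $\Cal^*_x\cong L^{\otimes k}|_{Y_x}$ where $k=\rank\Cal$ (more precisely $\Cal^*_x\cong(\det\text{-type})$ — since $\Cal_x\otimes L|_{Y_x}$ is trivial of rank $k$, we get $\Cal_x\cong\bigoplus_{1}^{k}L^*|_{Y_x}$, so $\Cal^*_x\cong\bigoplus_1^k L|_{Y_x}$). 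Then $h^0(\Cal^*_x)=k\cdot h^0(L|_{Y_x})$, and by (c) together with the Kodaira vanishing / Serre duality argument of Example \ref{exm:lin_Y-str}, $h^1(L^*|_{Y_x})=0$ when $\dim Y_x\geq2$, but more to the point $h^0(L|_{Y_x})$ is governed by Riemann–Roch-type constancy; the cleanest route is to invoke that $h^1(L|_{Y_x})=0$ is part of (c), so $h^0(L|_{Y_x})=\chi(L|_{Y_x})$ is constant in $x$ by deformation invariance of the Euler characteristic (or again \cite[Theorem 2.3]{KodSpe-I_II} applied to $L$). Thus $(Y,\p,\Cal)$ is regular, producing $E$, $\r$, and the morphism $(\p,\chi)$.

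Next I would verify hypotheses (1) and (2) of Proposition \ref{prop:suff_cond}. For (1): $\Cal_x\otimes(\otimes^2\Cal^*_x)\cong\Cal_x^*$ as soon as one uses the triviality from (b) to write $\Cal_x\cong\bigoplus L^*|_{Y_x}$ and $\Cal^*_x\cong\bigoplus L|_{Y_x}$; then $\Cal_x\otimes\otimes^2\Cal^*_x$ is a direct sum of copies of $L|_{Y_x}$, whose first cohomology vanishes by (c). Hence (1) holds. For (2): the remark following Proposition \ref{prop:suff_cond} states precisely that if $\Cal^*_x$ is very ample then (2) follows automatically once (1) holds — and $\Cal^*_x\cong\bigoplus L|_{Y_x}$ is very ample because $L|_{Y_x}$ is very ample by (c) (a direct sum of very ample line bundles is very ample, or at least the multiplication map $H^0\otimes H^0\to H^0(\otimes^2)$ is surjective of constant rank, which is all (2) asks; constancy again follows from the $\chi$-constancy argument). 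So Proposition \ref{prop:suff_cond} applies and yields, locally, a $\r$-connection $\widetilde\nabla$ on $E$ with $\Cal$ preserved by $\p^*\widetilde\nabla$ — which is the second assertion.

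Finally, for maximality: if some fibre $Y_x$ has $\dim Y_x\geq2$ and $b_1(Y_x)=0$, then since $L|_{Y_x}$ is very ample (hence ample) with empty base locus, Example \ref{exm:lin_Y-str} tells us the map $Y_x\to\mathbb{P}(H^0(L|_{Y_x})^*)$ is a maximal linear $Y_x$-structure; tensoring with the trivial bundle (from (b), $\Cal^*_x$ is $L|_{Y_x}$ tensored with a trivial bundle) preserves this, and by upper semicontinuity plus the already-established constancy of the relevant cohomology dimensions, maximality propagates to all fibres. Hence the map $Y_x\to{\rm Gr}_k(T_xM)$, $y\mapsto\dif\!\p(\Cal_y)$, is a maximal linear $Y_x$-structure for every $x$, i.e.\ $(Y,\p,\Cal)$ is maximal. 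The main obstacle I expect is bookkeeping around the precise identification $\Cal^*_x\cong L|_{Y_x}\otimes(\text{trivial})$ and making sure the "constant rank" and constancy-of-$h^0$ statements are genuinely uniform in $x$ — this is where the deformation-invariance theorem of Kodaira–Spencer does the real work — rather than anything conceptually deep.
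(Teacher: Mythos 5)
Your proposal is correct and follows exactly the paper's (one-line) proof, which simply combines Example \ref{exm:lin_Y-str} with Proposition \ref{prop:suff_cond}; your verification that (b) gives $\Cal^*_x\cong\bigoplus L|_{Y_x}$ and hence conditions (1) and (2) via the remark on very ampleness is the intended reading. The only minor imprecisions are the appeal to ``deformation invariance of the Euler characteristic'' (which would need vanishing of all higher cohomology, whereas $h^1=0$ plus semicontinuity/base change is the clean route to constancy of $h^0$) and the propagation of ``some fibre'' to ``all fibres,'' which is most directly Ehresmann's theorem; neither affects the argument.
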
  
\begin{proof} 
This follows quickly from Example \ref{exm:lin_Y-str} and Proposition \ref{prop:suff_cond}\,. 
\end{proof} 

\begin{exm} 
Let $(Y,\p,\Cal)$ be an almost twistorial structure on $M$ such that $\Cal$ is one-dimensional. Assume that, for any $x\in M$, the map 
$\phi_x:Y_x\to PT_xM$\,, $\phi_x(y)=\dif\!\p\bigl(\Cal_y\bigr)$\,, $(y\in Y_x)$\,, is a normal embedding, and $H^1(\Cal^*_x)=0$\,, where 
$\Cal_x=\Cal|_{Y_x}$\,.\\ 
\indent 
On identifying $Y$ with its image, through $\phi=(\phi_x)_{x\in M}$\,, Corollary \ref{cor:suff_cond} implies that, locally, 
there exist partial connections $\nabla$ on some distribution on $M$ such that the projection through $\p$ of any leaf of $\Cal$ 
is a geodesic of $\nabla$. Consequently, the following hold, as well:\\ 
\indent 
\quad1) If the tangent space at some point of a geodesic of $\nabla$ belongs to $Y$ then this holds for all of the points of the geodesic.\\ 
\indent 
\quad2) $\Cal$ is given by the tautological lifts of the geodesics of $\nabla$ whose tangent spaces belong to $Y$.\\ 
\indent 
For concrete examples, take $Y$ to be $PTM$ or the bundle of isotropic directions of a conformal structure on $M$ 
(see \cite{LeB-thesis}\,, \cite{LeB-nullgeod} for different proofs).   
\end{exm}

\section{Regular twistorial structures} \label{section:regular_twist}

\indent  
Recall \cite{PanWoo-sd} that, a \emph{twistorial structure} on $M$ is an almost twistorial structure $(Y,\p,\Cal)$ on $M$ with 
$\Cal$ integrable. Suppose, further, that there exists a surjective submersion $\psi:Y\to Z$ satisfying:\\ 
\indent 
\quad(1) ${\rm ker}\dif\!\psi=\Cal$\,,\\ 
\indent 
\quad(2) $\psi|_{Y_x}$ is injective, for any $x\in M$.\\ 
Then $Z$ is the \emph{twistor space} of $(Y,\p,\Cal)$ and the embedded submanifolds $\psi(Y_x)\subseteq Z$\,, $(x\in M)$\,, 
are the \emph{twistor submanifolds} (cf.\ \cite{PanWoo-sd}\,).\\ 
\indent 
If $(Y,\p,\Cal)$ is regular then, as in Section \ref{section:regular_a_twist}\,, we denote by $E$ the dual of the direct image, 
through $\p$, of $\Cal^*$. We, also, have a canonical morphism of vector bundles $\r:E\to TM$. 

\begin{cor} \label{cor:Ward} 
Let $(Y,\p,\Cal)$ be a regular twistorial structure with twistor space given by $\psi:Y\to Z$\,. If the fibres 
$\psi$ are simply-connected then, for any Lie group $G$, there exists a functorial correspondence between the following:\\ 
\indent 
{\rm (i)} Principal bundles $(\mathcal{P},Z,G)$ whose restrictions to the twistor submanifolds are trivial.\\ 
\indent 
{\rm (ii)} Principal bundles $(P,M,G)$ endowed with principal $\r$-connections whose pull backs to $(Y,\Cal,\iota)$ are flat 
(partial connections), where $\iota:\Cal\to TY$ is the inclusion. 
\end{cor}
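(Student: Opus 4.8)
The plan is to combine Proposition~\ref{prop:almost_Ward} (the identification of principal $\r$-connections on $P$ with principal partial connections on $\p^*P$ over $\Cal$) with the classical Ward correspondence (descent of flat partial connections along a submersion with simply-connected fibres). First I would set up the correspondence in the direction (ii)$\to$(i). Starting from $(P,M,G)$ with a principal $\r$-connection $c$ whose pull back $(\p,\chi)^*c$ to $(Y,\Cal,\iota)$ is flat, Proposition~\ref{prop:almost_Ward} tells us $(\p,\chi)^*c$ is precisely a flat principal partial connection on $\p^*P$ over $\Cal=\ker\dif\!\psi$. Since $\psi:Y\to Z$ is a surjective submersion with simply-connected fibres, a flat partial connection along the fibres of $\psi$ has holonomy that is trivial on each fibre, so the horizontal foliation descends: there is a principal bundle $(\mathcal{P},Z,G)$ with $\psi^*\mathcal{P}=\p^*P$ as principal bundles, the flat partial connection being the one whose leaves are the fibres of $\psi^*\mathcal{P}\to\mathcal{P}$. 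Restricting to a twistor submanifold $\psi(Y_x)$: its preimage under $\psi$ meets $Y_x$ in a single point (condition (2) in the definition of twistor space), and $\mathcal{P}|_{\psi(Y_x)}$ pulls back under the diffeomorphism $\psi|_{Y_x}:Y_x\to\psi(Y_x)$ to $(\p^*P)|_{Y_x}=P_x\times Y_x$, which is trivial; hence $\mathcal{P}|_{\psi(Y_x)}$ is trivial. This gives the map (ii)$\to$(i).

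Next I would construct the inverse (i)$\to$(ii). Given $(\mathcal{P},Z,G)$ whose restriction to each $\psi(Y_x)$ is trivial, form $\psi^*\mathcal{P}$ over $Y$; it carries the tautological flat partial connection over $\ker\dif\!\psi=\Cal$ (the one whose horizontal leaves are the fibres of $\psi^*\mathcal{P}\to\mathcal{P}$). The triviality of $\mathcal{P}$ on each twistor submanifold means $(\psi^*\mathcal{P})|_{Y_x}$ is trivial for every $x$, which is exactly the condition needed for $\psi^*\mathcal{P}$ to be the $\p$-pull back of a principal bundle $P$ over $M$ — indeed $P$ can be taken to be the direct image of $\psi^*\mathcal{P}$ by $\p$, which is a well-defined principal bundle precisely because the fibrewise restrictions are trivial. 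Then Proposition~\ref{prop:almost_Ward} converts the flat partial connection on $\p^*P=\psi^*\mathcal{P}$ over $\Cal$ into a principal $\r$-connection on $P$, whose pull back to $(Y,\Cal,\iota)$ is by construction the original flat partial connection. One checks the two constructions are mutually inverse and that both are functorial in $G$ (and compatible with bundle morphisms), which is routine once the objects are identified.

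The main obstacle is the descent step: showing that a flat principal partial connection on $\p^*P$ along the fibres of the submersion $\psi$, with $\psi$ having connected and simply-connected fibres, actually arises as the pull back of a principal bundle $\mathcal{P}$ over $Z$. This is the genuine content of the Ward transformation. The point is that a flat partial connection over the integrable distribution $\ker\dif\!\psi$ defines a foliation of the total space of $\p^*P$ transverse to the fibres of $\p^*P\to Y$; the leaves through a single fibre of $\psi$ form a principal $G$-bundle over that fibre which, being flat with simply-connected (and connected) base, is trivial, so each leaf is a global section over its fibre of $\psi$; the space of leaves is then a principal bundle over $Z=Y/\!\ker\dif\!\psi$. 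Properness of $\p$ guarantees the fibres of $\psi$ (which are the $Y_x$ up to the identification, or rather fibres of $\psi$ meeting each $Y_x$) are compact, so no completeness issue arises and the leaf space is Hausdorff; simple-connectedness is what kills the holonomy. I would phrase this carefully but not belabour it, citing the standard fact (e.g.\ as in \cite{PanWoo-sd} or the original Ward construction). Everything else — the fibrewise-triviality bookkeeping for direct images of principal bundles, and the invocation of Proposition~\ref{prop:almost_Ward} — is formal.
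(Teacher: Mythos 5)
Your proposal is correct and follows exactly the paper's (very terse) proof: the correspondence is defined by $\psi^*\mathcal{P}=\p^*P$, and Proposition~\ref{prop:almost_Ward} converts flat principal partial connections on $\p^*P$ over $\Cal=\ker\dif\!\psi$ into the required principal $\r$-connections on $P$; your elaboration of the descent along $\psi$ and of the direct-image construction of $P$ is just the standard content the paper leaves implicit. One small inaccuracy: properness of $\p$ does not make the fibres of $\psi$ compact (they are transverse to the $Y_x$), but compactness is not needed there --- connectedness and simple-connectedness of the $\psi$-fibres suffice to kill the holonomy and identify each $\mathcal{P}_z$ with the $G$-torsor of flat sections over $\psi^{-1}(z)$.
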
 
\begin{proof} 
The correspondence is given by $\psi^*\mathcal{P}=\p^*P$ and the result is a quick consequence of Proposition \ref{prop:almost_Ward}\,.  
\end{proof} 

\indent 
Note that, Corollary \ref{cor:Ward} is just the Ward transformation for regular twistorial structures (compare \cite{Pan-hmhKPW}\,).   

\begin{thm} \label{thm:suff_cond} 
Let $(Y,\p,\Cal)$ be a regular twistorial structure with twistor space given by $\psi:Y\to Z$\,, and let $L$ be a line bundle over $Z$ such that:\\ 
\indent 
\quad{\rm (a)} $\Cal\otimes\psi^*L$ restricted to each fibre of $\p$ is trivial;\\ 
\indent 
\quad{\rm (b)} $(\psi^*L)|_{Y_x}$ is very ample, where $Y_x=\p^{-1}(x)$\,, for any $x\in M$.\\ 
\indent 
Then, locally, there exist  
$\r$-connections $\nabla$ on $E$ such that:\\ 
\indent 
\quad{\rm (i)} $\Cal$ is preserved by $\p^*\nabla$;\\ 
\indent 
\quad{\rm (ii)} $\nabla$ is compatible with the decomposition $E=H\otimes F$, where $H$ is the dual of the direct image (through $\p$) 
of $\psi^*L$\,, and $F$ is the direct image of $\Cal\otimes\psi^*L$\,.\\ 
\indent  
Furthermore, if some fibre of $Y$ is of dimension at least two and has zero first Betti number then $(Y,\p,\Cal)$ is maximal. 
\end{thm}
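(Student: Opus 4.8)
The plan is to reduce Theorem~\ref{thm:suff_cond} to Corollary~\ref{cor:suff_cond} applied to the line bundle $\psi^*L$ on $Y$, and then to upgrade the resulting $\r$-connection so that it is compatible with the tensor decomposition $E=H\otimes F$. First I would check the hypotheses of Corollary~\ref{cor:suff_cond} for the triple $(Y,\p,\Cal)$ and the line bundle $\psi^*L$: condition (a) there is immediate since $\Cal\cap({\rm ker}\dif\!\p)=0$ holds because $(Y,\p,\Cal)$ is (almost) twistorial; condition (b) there is exactly hypothesis (a) here; and condition (c) there follows from hypothesis (b) here together with the fact that a very ample line bundle on a compact complex manifold has vanishing first cohomology when the manifold is, say, a fibre $Y_x$ — more precisely, I would invoke that $(\psi^*L)|_{Y_x}$ being very ample gives an embedding of $Y_x$ into projective space under which $(\psi^*L)|_{Y_x}$ is the restriction of $\ol(1)$, so by the Kodaira-type vanishing used in Example~\ref{exm:lin_Y-str} (or directly) its first cohomology vanishes. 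Hence Corollary~\ref{cor:suff_cond} already yields, locally, $\r$-connections $\nabla$ on $E$ with $\Cal$ preserved by $\p^*\nabla$, which is conclusion (i); the last sentence about maximality is also immediate from Corollary~\ref{cor:suff_cond}.

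The substantive point is conclusion (ii). Here I would first establish the decomposition $E=H\otimes F$ itself. Restricting the exact sequence \eqref{e:rho_induced} to a fibre $Y_x$ and tensoring by $(\psi^*L)|_{Y_x}$, hypothesis (a) makes $\Cal_x\otimes(\psi^*L)|_{Y_x}$ trivial, so $\Cal_x\cong \bigl((\psi^*L)^*\bigr)|_{Y_x}\otimes\bigl(\Cal_x\otimes(\psi^*L)|_{Y_x}\bigr)$ and, passing to direct images, the dual $\Cal_x^*$ is $\bigl((\psi^*L)|_{Y_x}\bigr)\otimes$(trivial bundle with fibre $(\Cal_x\otimes(\psi^*L)|_{Y_x})^*$); taking spaces of sections and dualizing gives $E_x=H_x\otimes F_x$ where $H_x$ is the dual of $H^0((\psi^*L)|_{Y_x})$ and $F_x=H^0(\Cal_x\otimes(\psi^*L)|_{Y_x})$. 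Thus $E=H\otimes F$ globally, with $H$ and $F$ the direct images named in the statement. Then I would rerun the obstruction argument in the proof of Proposition~\ref{prop:suff_cond}, but carried out inside the sub-bundle $H\otimes F\otimes(\otimes^2\Cal^*)\hookrightarrow E\otimes(\otimes^2\Cal^*)$ respecting the factorisation: the point is that the morphism $\r$, and hence the whole machinery producing the correction tensor $B$, is compatible with tensoring by the trivial factor coming from $F$, so $B$ can be chosen to act only on the $H$-factor (equivalently, $B$ is a section of ${\rm Hom}(H,H)\otimes(\otimes^2E^*)$ rather than the full ${\rm Hom}(E,E)\otimes(\otimes^2E^*)$). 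Subtracting this $B$ from the pull-back-adapted $\nabla$ of step one gives a $\r$-connection that both preserves $\Cal$ and is a tensor-product connection for $E=H\otimes F$, i.e. is compatible with the decomposition.

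The main obstacle I anticipate is the bookkeeping in step two: verifying that the correction tensor $B$ supplied by the argument of Proposition~\ref{prop:suff_cond} genuinely respects the splitting $E=H\otimes F$, rather than merely being \emph{some} section of $E\otimes(\otimes^2E^*)$. The cleanest way to handle this is to observe that the failure of $\p^*\nabla$ to preserve $\Cal$, measured by the section $b$ of $\mathcal{E}\otimes(\otimes^2\Cal^*)$, can be computed after tensoring everything by $\psi^*L$, where $\Cal\otimes\psi^*L$ is fibrewise trivial; in that gauge the whole construction is linear over the direct image $F$ of $\Cal\otimes\psi^*L$, so $b$ — and therefore $B$ — automatically lands in $H\otimes H^*\otimes(\otimes^2\Cal^*)$ and $H\otimes H^*\otimes(\otimes^2E^*)$ respectively. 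One should also double-check that the constancy-of-dimension input needed to run Proposition~\ref{prop:suff_cond} (its conditions (1) and (2)) is inherited here — condition (2) from the very ampleness in hypothesis (b) as in the Remark after Proposition~\ref{prop:suff_cond}, and condition (1) either from a fibre-dimension/Betti hypothesis or, when $\Cal$ is a line bundle, automatically. With those checks in place the construction of $\widetilde\nabla$ and the verification of (i) and (ii) are routine.
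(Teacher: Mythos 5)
There is a genuine gap, and it sits in your very first step. The reduction to Corollary \ref{cor:suff_cond} requires its hypothesis (c), namely $H^1\bigl((\psi^*L)|_{Y_x}\bigr)=0$, and this is \emph{not} among the hypotheses of Theorem \ref{thm:suff_cond}; your claim that very ampleness forces it is false. The Kodaira-type vanishing invoked in Example \ref{exm:lin_Y-str} concerns the \emph{dual} $L^*$ of an ample line bundle ($H^0(L^*)=0$, and $H^1(L^*)=0$ when $\dim Y\geq2$), not $L$ itself: the canonical bundle of a non-hyperelliptic curve of genus at least $3$ is very ample with $H^1\neq0$. Nor is the missing vanishing cosmetic: under hypothesis (a) one has $\Cal_x\cong\bigl((\psi^*L)^*\otimes\p^*F\bigr)|_{Y_x}$ with $F_x$ trivial, so condition (1) of Proposition \ref{prop:suff_cond} reduces exactly to $H^1\bigl((\psi^*L)|_{Y_x}\bigr)=0$. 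Hence the route through Proposition \ref{prop:suff_cond}/Corollary \ref{cor:suff_cond} proves a weaker statement than the theorem. Your second step is also shakier than you acknowledge: ``compatible with the decomposition'' means $\nabla=\nabla^H\otimes\nabla^F$, and subtracting a correction $B$ from an arbitrary $\nabla$ produced by Corollary \ref{cor:suff_cond} does not obviously land you in the set of tensor-product connections; the assertion that $b$, and hence $B$, ``automatically'' lives in $H\otimes H^*\otimes(\otimes^2E^*)$ is precisely the point that needs proof and is not supplied.

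The idea you are missing is that $L$ is a line bundle on the twistor space $Z$, not merely on $Y$: since $\Cal={\rm ker}\dif\!\psi$, the pull-back $\psi^*L$ carries a \emph{canonical flat partial connection over $\Cal$}, whose covariantly constant local sections are the pull-backs of local sections of $L$. This induces a partial connection over $\Cal$ on $\p^*H$, which by Proposition \ref{prop:almost_Ward} corresponds to a $\r$-connection $\nabla^H$ on $H$ --- no cohomological vanishing and no obstruction argument are needed. One then takes \emph{any} local $\r$-connection $\nabla^F$ on $F$ and checks directly that $\nabla^H\otimes\nabla^F$ preserves $\Cal$, using the identification $\Cal=\psi^*(L^*)\otimes\p^*F$ and local sections $u$ of $\psi^*(L^*)$ with $\bigl(\p^*\nabla^H\bigr)_{u\otimes v}u=0$. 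Your proposal, by treating $\psi^*L$ as an abstract line bundle on $Y$, discards exactly the structure that makes the theorem hold under its stated hypotheses.
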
  
\begin{proof} 
Assertion (ii) means that $\nabla=\nabla^H\otimes\nabla^F$, 
where $\nabla^H$ and $\nabla^F$ are $\r$-connections on $H$ and $F$, respectively. 
It is sufficient to prove that we can choose $\nabla^H$ and $\nabla^F$ such that (i) is satisfied.\\ 
\indent 
Let $\nabla^F$ be any $\r$-connection (locally defined) on $F$ and we define $\nabla^H$ as follows. 
Firstly, note that, $\psi^*L$ is endowed with a flat partial connection, over $\Cal$\,, whose 
covariantly constant local sections are the pull backs by $\psi$ of the local sections of $L$\,. Consequently, $\p^*H$ is canonically endowed 
with a partial connection, over $\Cal$\,. By Proposition \ref{prop:almost_Ward}\,, this corresponds to a $\r$-connection $\nabla^H$ on $H$.\\ 
\indent 
Now, the proof follows from the obvious identification $\Cal=\psi^*(L^*)\otimes\p^*F$ and 
the fact that, locally, we may choose sections $u$ of $\psi^*(L^*)$ such that 
$\bigl(\p^*\nabla^H\bigr)_{u\otimes v}u=0$\,, for any $v\in\p^*F$ (the latter fact is a consequence of the definition of $\nabla^H$). 
\end{proof} 

\begin{rem}  \label{rem:gen_rho_q} 
With the same notations as in the proof of Theorem \ref{thm:suff_cond}\,, 
we have an obvious embedding $Y\subseteq PH$. Let $\mathcal{L}$ be the dual of the tautological line bundle over $PH$. 
On denoting by $\p$, also, the projection from $PH$ onto $M$, 
note that, $\mathcal{L}|_Y=\psi^*L$ and $\mathcal{L}^*\otimes\p^*F$ is a subbundle of $\p^*E$\,.\\ 
\indent 
Now, $\nabla^H$ induces a $\r$-connection on $PH$\,; that is, a morphism of vector bundles $c:\p^*E\to T(PH)$ which when 
composed with the morphism $T(PH)\to\p^*(TM)$\,, induced by $\dif\!\p$, gives $\p^*\r$\,.\\ 
\indent  
We define $\mathcal{D}=c\bigl(\mathcal{L}^*\otimes\p^*F\bigr)$\,, and we claim that $\mathcal{D}|_Y=\Cal$\,.  
Indeed, this follows from the fact that, locally, we may choose local sections $u$ of $\mathcal{L}^*|_Y$ such that 
$\bigl(\p^*\nabla^H\bigr)_{u\otimes v}u=0$\,, for any $v\in(\p^*F)|_Y$\,.\\ 
\indent 
Therefore $\Cal$ can be retrieved from $(E,\r,Y,\nabla)$\,. This applies, for example, when $Z$ is endowed with a 
locally complete family of Riemann spheres, embedded with nonnegative normal bundles (see \cite{Pan-qgfs} for a different proof).  
\end{rem}

\end{document}